\DeclareMathOperator{\Lapl}{\triangle}
\title{A Discontinuous Coarse Spaces (DCS) Algorithm for Cell Centered
  Finite Volumes based Domain Decomposition Methods: the
  DCS-RJMin algorithm}
\author{Kévin Santugini\thanks{Universit\'e Bordeaux, IMB, CNRS UMR5251,  MC2, INRIA Bordeaux - Sud-Ouest  \texttt{Kevin.Santugini\@math.u-bordeaux1.fr}}}
\newtheorem{theorem}{Theorem}[section]
\newtheorem{proposition}[theorem]{Proposition}
\theoremstyle{definition}
\newtheorem{algorithm}[theorem]{Algorithm}
\newcommand{\D}{\mathrm{D}}
\begin{document}

\maketitle
\begin{abstract}
In this paper, we introduce a new coarse space algorithm, the ``Discontinuous
Coarse Space Robin Jump Minimizer'' (DCS-RJMin),
to be used in conjunction with one-level domain decomposition methods
(DDM). This new algorithm makes use of 
Discontinuous Coarse Spaces(DCS), and is designed for DDM that
naturally produce discontinuous iterates such as Optimized Schwarz
Methods(OSM). This algorithm is suitable both at the
continuous level and for cell-centered finite volume
discretizations.  At the continuous level, we prove, under some conditions on the
parameters of the algorithm, that the difference between two
consecutive iterates goes to $0$. We also provide numerical 
results illustrating the convergence behavior of the DCS-RJMin algorithm.\\
\textbf{Key words:} discontinuous coarse space, optimized Schwarz method.
\end{abstract}

\section{Introduction}
Due to the ever increasing parallelism in modern computers, and the
ever increasing affordability of massively parallel calculators, it is
of utmost importance to develop algorithms that are not only parallel
but scalable. 
In this paper, we are interested in
Domain Decomposition Methods(DDM) which are one way to parallelize the numerical
resolution of Partial Differential Equations(PDE). 

In Domain Decomposition Methods, the whole domain is subdivided in
several subdomains and a computation unit is assigned to each
subdomain. In this paper, we only consider non-overlapping domain decompositions.
The numerical solution is then computed in parallel inside each subdomain
with artificial boundary conditions. Then, subdomains exchange
information between each other. This process is reapplied until convergence. In
practice, such a scheme, called iterative DDM, should be accelerated using
Krylov methods. However, for the purpose of analyzing an algorithm, 
it can be interesting to work directly with the iterative
algorithm itself as Krylov acceleration is so efficient it can hide
away small design problems in the algorithm. 

In one-level DDM, only
neighboring subdomains exchange information. Most classical DDM are
one-level. While one-level DDM can be very efficient and converge in a
few iterations, they are not scalable: convergence can never occur
before information has propagated between the two furthest apart
subdomains. \textit{I.E.}, a one level DDM must iterate at least as
many times as the diameter of the connectivity graph of the domain decomposition. Typically, if
$N$ is the number of subdomains, this means at least $O(N)$ iterations
for one-dimensional problems, $O(\sqrt{N})$ for two-dimensional
ones and $O(\sqrt[3]{N})$ for three-dimensional ones.
For DDM to be scalable, some kind of global information exchange   is
needed. The traditional approach to achieve such global information
exchange is adding a coarse space to a pre-existing one-level DDM.

To the author knowledge, the first use of
coarse spaces in Domain Decomposition Methods can be found 
in~\cite{Nicolaides:1987:DeflationConjugateGradientApplicationBoundaryValueProblems}.
Because coarse spaces enable global information exchange, scalability
becomes possible. Well known methods with coarse spaces are the two-level Additive Schwarz
method~\cite{Dryja:1987:AVS}, the FETI method~\cite{Mandel:1996:BDP},
and the balancing Neumann-Neumann 
methods~\cite{Mandel:1993:BalancingDomainDecomposition,Dryja:1995:SMN,Mandel:1996:CSM}.
Coarse spaces are also an active area of research, see
\cite{Dolean.Nataf.Scheichl.Spillane:AnalysisTwo-LevelSchwarzMethods,Nataf:2011:CSC}
for high contrast problems. It is not trivial to add an effective
coarse space to one-level DDM that produce discontinuous iterates such
as Optimized Schwarz Methods,
see~\cite{Dubois:2009:CBO,Dubois:2012:TOS},
and~\cite[chap.5]{Dubois:2007:OSM}.

In~\cite{Gander.Halpern.Santugini:2013:DD21-DCSDMNV}, the authors introduced the idea
of using discontinuous coarse spaces. Since many DDM algorithms produce discontinuous iterates, the use of 
discontinuous coarse corrections is needed to correct the discontinuities
between subdomains. In that proceeding, one
possible algorithm, the DCS-DMNV
(Discontinuous Coarse Space Dirichlet Minimizer Neumann Variational),
was described at the continuous level and at the discrete level for
Finite Element Methods on a non-overlapping Domain Decomposition.
In~\cite{Santugini:2014:DCS-DGLC}, a similar method, the DCS-DGLC algorithm
 was proposed. Both the DCS-DMNV and the DCS-DGLC are well suited to
 finite element discretizations. Also, a similar approach was proposed
 in~\cite{Gander.Halpern.Santugini:2013:ANC}
for Restricted Additive Schwarz(RAS), an overlapping DDM, 

It was proven recently that the proof of convergence for 
Schwarz found in~\cite{Lions:1990:SAM,Despres:1991:DomainDecompositionHelmholtzProblem} can be extended to the 
Discrete Optimized Schwarz algorithm 
with cell centered finite volume methods,
see~\cite{Gander.Kwok.Santugini:EnergyProofDiscreteOSMCellCenteredFV}. It would be interesting to have a
discontinuous corse space algorithm that is suited to cell centered
finite volumes. Unfortunately, neither the DCS-DMNV algorithm nor the 
DCS-DGLC algorithm are practical for cell centered-finite volume
methods: the stiffness matrix necessary to compute the coarse correction 
isn't as sparse as one would intuitively believe. In this paper, our
main goal is to describe one family of algorithms making use of
discontinuous coarse spaces but suitable for cell centered
finite volumes discretizations. 

In~\S\ref{sect:MotivationDCS}, we briefly recall the motivations
behind the use of discontinuous coarse space. In~\S\ref{sect:DCS-RJMin}, we
present the DCS-RJMin
algorithm. In~\S\ref{sect:ConvergenceTheoremDCSRJMin}, we prove that
under some conditions on the algorithm parameter, the $L^2$-norm of
the difference between two consecutive iterates goes to zero.
Finally, we present numerical results in~\S\ref{sect:NumericalResults}.

\section{Optimized Schwarz and Discontinuous Coarse Spaces}\label{sect:MotivationDCS}
Let's consider a polygonal domain $\Omega$ in $\mathbb{R}^2$. As a simple test case, we wish to solve
\begin{align*}
\eta u -\Lapl u&=f\;\text{in $\Omega$},\\ 
u&=0\;\text{on $\partial\Omega$}.
\end{align*} 
Without a coarse space, the Optimized Schwarz Method is defined as
\begin{algorithm}[Coarseless OSM]
\begin{enumerate}
\item Set $u_i^0$ to either the null function or to the coarse solution.
\item Until convergence
\begin{enumerate}
\item Set $u_i^{n+1}$ as the unique solution to
\begin{align*}
\eta u_i^{n+1} -\Lapl u_i^{n+1}&=f\;\text{in $\Omega_i$},\\ 
\frac{\partial u_i^{n+1}}{\partial \bm{n}_i} + p u_i^{n+1}&=
\frac{\partial u_j^{n}}{\partial \bm{n}_i} + p u_j^{n}
\;\text{on $\partial\Omega_i\cap\partial\Omega_j$},\\ 
u^{n+1}&=0\;\text{on $\partial\Omega_i\cap\partial\Omega$}.
\end{align*}
\end{enumerate}
\end{enumerate}
\end{algorithm}
In practical applications, such an algorithm should be accelerated
using Krylov methods. However, studying the iterative (Richardson) version can give
mathematical insight on the convergence speed of the Krylov accelerated algorithm.

The main shortcoming of the coarseless Optimized
Schwarz methods is the absence of direct
communication between distant subdomains. To get a scalable algorithm,
one can use a coarse space. A general version of a coarse space method
for the OSM is 
\begin{algorithm}[Generic OSM with coarse space]
\begin{enumerate}
\item Set $u_i^0$ to either the null function or to the coarse solution.
\item Until convergence
\begin{enumerate}
\item Set $u_i^{n+1}$ as the unique solution to
\begin{align*}
\eta u_i^{n+1/2} -\Lapl u_i^{n+1}&=f\;\text{in $\Omega_i$},\\ 
\frac{\partial u_i^{n+1/2}}{\partial \bm{n}_i} + p u_i^{n+1/2}&=
\frac{\partial u_j^{n}}{\partial \bm{n}_i} + p u_j^{n}
\;\text{on $\partial\Omega_i\cap\partial\Omega_j$},\\ 
u^{n+1/2}&=0\;\text{on $\partial\Omega_i\cap\partial\Omega$}.
\end{align*}
\item Compute in some way a coarse corrector $U^{n+1}$ belonging to
  the coarse space $X$, then set
\begin{equation*}
u^{n+1}=u^{n+1/2}+U^{n+1}.
\end{equation*}
\end{enumerate}
\end{enumerate}
\end{algorithm}
More important than the algorithm used to compute the coarse
correction $U^{n+1}$ is the choice of an adequate
coarse space itself. The ideas presented
in~\cite{Gander.Halpern.Santugini:2013:DD21-DCSDMNV} still apply. In particular,
the coarse space should contain discontinuous functions and the 
discontinuities of the coarse corrector should be located at the
interfaces between subdomains. For these reasons, we suppose the whole
domain $\Omega$ is meshed by either a coarse
triangular mesh or a cartesian mesh$\mathcal{T}_H$ and we use 
each coarse cell of $\mathcal{T}_H$ as a subdomain $\Omega_i$ 
of $\Omega$. The optimal theoretical coarse space $\mathcal{A}$ is the set of all
functions that are solutions to the homogenous equation inside each
subdomain: for linear problems, the errors made by any iterate are guaranteed to belong to
that space.  With an adequate algorithm to compute $U^{n+1}$, the coarse space $\mathcal{A}$
gives a convergence in a single coarse iteration. Unfortunately this
complete coarse space is only practical for one dimensional
problems as it is of infinite dimension in higher dimensions. One
should therefore choose a finite dimensional subset $X_d$ of
$\mathcal{A}$. 

The choice of the coarse space $X_d$ is primordial. It should have a dimension that
is a small multiple of the number of subdomains. To choose $X_d$, one
only need to choose boundary
conditions on every subdomain, then fill the interior of each subdomain
 by solving the homogenous equation in each subdomain. In this paper, we have not tried
 to optimize $X_d$ and for the sake of simplicity have chosen
$X_d$ as the set of all functions in $\mathcal{A}$ with linear Dirichlet boundary conditions
on each interface between any two adjacent subdomains.

\section{The DCS-RJMin Algorithm}\label{sect:DCS-RJMin}
We now describe the DCS-Robin Jump Minimizer algorithm:
\begin{algorithm}[DCS-RJMin]\label{algo:DCS-RJMin}
\item Set $p>0$ and $q>0$ and $X_d$ a finite dimensional subspace of 
$\mathcal{A}$.
\item Set $u^0$ to either $0$ or to the coarse space solution.
\item Until Convergence
\begin{enumerate} 
\item Set $u^{n+\frac{1}{2}}$ as the unique solution to
\begin{align*}
\eta u^{n+\frac{1}{2}}-\Lapl u^{n+\frac{1}{2}}&=f\;\text{in $\Omega_i$},\\
 \frac{\partial u_i^{n+\frac{1}{2}}}{\partial\bm{\nu}_{ij}}+pu_i^{n+\frac{1}{2}}&=
 \frac{\partial u_j^{n}}{\partial\bm{\nu}_{ij}}+pu_j^{n}\;\text{on $\partial\Omega_i\cap\partial\Omega_j$},\\
 u_i&=0\;\text{on $\partial\Omega_i\cap\partial\Omega_j$.}
\end{align*}
\item Set $U^{n+1}$ in $X_d$ as the unique coarse function that minimizes
\begin{gather*}
\begin{aligned}
\sum_{i=1}^N\sum_{j\in\mathcal{N}(i)}&\bigg\lVert
\frac{\partial(u_i^{n+\frac{1}{2}}+U_i^{n+1})}{\partial\bm{\nu}_{i}}+q(u_i^{n+\frac{1}{2}}+U_i^{n+1})\\
&-\frac{\partial(u_j^{n+\frac{1}{2}}+U_j^{n+1})}{\partial\bm{\nu}_{i}}-q(u_j^{n+\frac{1}{2}}+U_j^{n+1})
\bigg\rVert_{L^2(\partial\Omega_i\cap\partial\Omega_j)}^2,
\end{aligned}
\end{gather*}
where $\bm{\nu}_i$ is the outward normal to subdomain $\Omega_i$ and
$\mathcal{N}(i)$ the set of all $j$ such that $\Omega_j$ and
$\Omega_i$ are adjacent.
\item Set $u^{n+1}:=u^{n+1/2}+U^{n+1}$.
\end{enumerate}
\end{algorithm}


\section{Partial ``Convergence'' results for DCS-RJMin}\label{sect:ConvergenceTheoremDCSRJMin}
We don't have a complete convergence theorem for the DCS-RJMin
algorithm. However, we can prove the iterates of 
the DCS-RJMin algorithm are close to converging when $p=q$:
\begin{proposition}\label{prop:Not Really Convergence}
If $q=p$. Then, the
iterates produced by the DCS-RJMin algorithm~\ref{algo:DCS-RJMin}
satisfy $\lim_{n\to+\infty}\lVert u_i^{n+1/2}-u_i^n\rVert_{L^2}=0$.
\end{proposition}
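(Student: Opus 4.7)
The plan is to mimic the classical Lions--Despr\'es energy argument for optimized Schwarz, applied on top of the coarse correction, by building a nonincreasing ``Robin-jump'' energy whose drop between consecutive iterations controls an $H^1$-norm of $u^{n+1/2}-u^n$. Since for $p=q$ the functional minimized in step (b) of Algorithm~\ref{algo:DCS-RJMin} is exactly a Robin-jump energy of $u^{n+1}=u^{n+1/2}+U^{n+1}$, I would work with
\begin{equation*}
T^n := \sum_{i=1}^{N}\sum_{j\in\mathcal{N}(i)}\Bigl\lVert(\partial_{\bm{\nu}_i}+p)u_i^n-(\partial_{\bm{\nu}_i}+p)u_j^n\Bigr\rVert_{L^2(\partial\Omega_i\cap\partial\Omega_j)}^2,
\end{equation*}
and try to prove $T^{n+1}\le T^n-4p\,D^n$, where $w_j^n:=u_j^n-u_j^{n+1/2}$ and $D^n:=\sum_j\int_{\Omega_j}(|\nabla w_j^n|^2+\eta|w_j^n|^2)$. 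Once this is in hand, $\sum_n D^n<+\infty$ and $\eta>0$ immediately give $\sum_i\lVert u_i^{n+1/2}-u_i^n\rVert_{L^2(\Omega_i)}^2\to 0$, which is the statement.

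The inequality $T^{n+1}\le T^{n+1/2}$, where $T^{n+1/2}$ denotes the same functional evaluated at $u^{n+1/2}$, is immediate: for $p=q$ the functional in step (b) is $T$ evaluated at $u^{n+1/2}+U$, so $U\equiv 0\in X_d$ is an admissible competitor with value $T^{n+1/2}$. The harder half is the exact identity $T^n-T^{n+1/2}=4p\,D^n$. Here I would use two structural facts: (i)~for $n\ge 1$, $w_j^n$ solves the homogeneous equation $\eta w_j^n-\Lapl w_j^n=0$ in $\Omega_j$, because $u_j^n=u_j^{n-1/2}+U_j^n$ with $U_j^n\in X_d\subset\mathcal{A}$ and both $u_j^{n-1/2}$ and $u_j^{n+1/2}$ solve the inhomogeneous equation with the same $f$; and (ii)~$w_j^n=0$ on $\partial\Omega_j\cap\partial\Omega$.

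Using the OSM update $(\partial_{\bm{\nu}_i}+p)u_i^{n+1/2}=(\partial_{\bm{\nu}_i}+p)u_j^n$ on $\Gamma_{ij}:=\partial\Omega_i\cap\partial\Omega_j$, the $(i,j)$-summand of $T^n$ rewrites as $\lVert(\partial_{\bm{\nu}_i}+p)w_i^n\rVert_{L^2(\Gamma_{ij})}^2$ and that of $T^{n+1/2}$ as $\lVert(\partial_{\bm{\nu}_i}+p)w_j^n\rVert_{L^2(\Gamma_{ij})}^2$. Swapping the order of summation and using $\bm{\nu}_i=-\bm{\nu}_j$ on $\Gamma_{ij}$ recasts these as
\begin{align*}
T^n &= \sum_{j}\lVert(\partial_{\bm{\nu}_j}+p)w_j^n\rVert_{L^2(\partial\Omega_j\setminus\partial\Omega)}^2,\\
T^{n+1/2} &= \sum_{j}\lVert(\partial_{\bm{\nu}_j}-p)w_j^n\rVert_{L^2(\partial\Omega_j\setminus\partial\Omega)}^2.
\end{align*}
Because $w_j^n=0$ on $\partial\Omega_j\cap\partial\Omega$, the two integrands $|(\partial_{\bm{\nu}_j}\pm p)w_j^n|^2$ reduce to $|\partial_{\bm{\nu}_j}w_j^n|^2$ there and coincide, so extending both integrals to all of $\partial\Omega_j$ leaves $T^n-T^{n+1/2}$ unchanged. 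Green's formula with $\Lapl w_j^n=\eta w_j^n$ then produces the Lions identity $\lVert(\partial_{\bm{\nu}_j}+p)w_j^n\rVert_{L^2(\partial\Omega_j)}^2-\lVert(\partial_{\bm{\nu}_j}-p)w_j^n\rVert_{L^2(\partial\Omega_j)}^2=4p\int_{\Omega_j}(|\nabla w_j^n|^2+\eta|w_j^n|^2)$, and summing over $j$ delivers $T^n-T^{n+1/2}=4pD^n$.

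Putting the two bounds together gives $T^{n+1}\le T^n-4pD^n$ for $n\ge 1$; since $T^n\ge 0$, the series $\sum D^n$ converges, hence $D^n\to 0$, and $D^n\ge\eta\sum_i\lVert u_i^{n+1/2}-u_i^n\rVert_{L^2(\Omega_i)}^2$ yields the claimed limit. The delicate step, and the one I expect to fuss over most, is the bookkeeping leading to the identity $T^n-T^{n+1/2}=4pD^n$: each summand of $T^n$ and $T^{n+1/2}$ is initially written in terms of the ``wrong'' normal $\bm{\nu}_i$ rather than $\bm{\nu}_j$, and one must use both the OSM update and $\bm{\nu}_i=-\bm{\nu}_j$ to consolidate the expressions into per-subdomain Robin-trace integrals on $\partial\Omega_j$ before Green's formula can telescope the energy.
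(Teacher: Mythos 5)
Your argument is correct and is essentially the paper's own proof read in the opposite direction: the paper starts from the per-subdomain Lions energy identity for $e_i^{n+1/2}-e_i^n$ and converts the boundary terms into Robin jumps via the transmission conditions, whereas you start from the jump functional $T^n-T^{n+1/2}$ and reduce it to the same volume quantity by Green's formula, with the identical use of $U=0$ as competitor to get $T^{n+1}\le T^{n+1/2}$ and the same telescoping sum. The key identity $T^n-T^{n+1/2}=4p\,D^n$, the role of $q=p$, and the final summation over $n$ all coincide with the paper's proof.
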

\begin{proof}
Let $u$ be the mono-domain solution, set $e_i^n=u_i^n-u_i$, then, 
following Lions energy estimates~\cite{Lions:1990:SAM},
\begin{equation*}
\begin{split}
&\phantom{=}\eta \int_{\Omega_i}\lvert e_i^{n+1/2}-e_i^n\rvert^2\D\bm{x}
+\int_{\Omega_i}\lvert\nabla (e_i^{n+1/2}-e_i^n)\rvert^2\D\bm{x}\\
&=\int_{\partial\Omega_i}\frac{\partial(e_i^{n+1/2}-e_i^n
 )}{\partial\bm{\nu}}\cdot (e_i^{n+1/2}-e_i^n)\\
&=\frac{1}{4p}\left(
\int_{\partial\Omega_i}
\lvert\frac{\partial (e_i^{n+1/2}-e_i^n )}{\partial\bm{\nu}}
+p (e_i^{n+1/2}-e_i^n)\rvert^2
-\lvert\frac{\partial (e_i^{n+1/2}-e_i^n )}{\partial\bm{\nu}}
-p (e_i^{n+1/2}-e_i^n)\rvert^2\right)\\
&=\frac{1}{4p}\left(
\int_{\partial\Omega_i}
\lvert\frac{\partial (e_i^{n+1/2}-e_i^n )}{\partial\bm{\nu}}
+p (e_i^{n+1/2}-e_i^n)\rvert^2
 -\int_{\partial\Omega_i}\lvert\frac{\partial (e_i^{n+1/2}-e_i^n )}{\partial\bm{\nu}}
 -p (e_i^{n+1/2}-e_i^n)\rvert^2\right)\\
&=\frac{1}{4p}\biggr(
\sum_j\int_{\partial\Omega_i\cap\partial\Omega_j}
\left\lvert \left(\frac{\partial e_j^{n}}{\partial\bm{\nu}_i}+p e_j^n\right)-\left(\frac{\partial e_i^{n)}}{\partial\bm{\nu}_i}+p e_i^n\right)\right\rvert^2
\\ & \phantom{=}
-\sum_j\int_{\partial\Omega_i\cap\partial\Omega_j}
\left\lvert\left(\frac{\partial e_i^{n+1/2}}{\partial\bm{\nu}_i}-p e_i^{n+1/2}\right)-\left(\frac{\partial e_j^{n+1/2)}}{\partial\bm{\nu}_i}-p e_j^{n+1/2}\right)\right\rvert^2\biggr)
\end{split}
\end{equation*}
We sum the above equality over all subdomains $\Omega_i$ and get
\begin{multline*}
\eta \sum_i\int_{\Omega_i}\lvert e_i^{n+1/2}-e_i^n\rvert^2\D\bm{x}
+\int_{\Omega_i}\lvert\nabla (e_i^{n+1/2}-e_i^n)\rvert^2\D\bm{x}
=\\=
\sum_{(i,j)}\frac{1}{4p}\left(\int_{\Gamma_{ij}}\left\lvert\left\lbrack \frac{\partial e^{n}}{\partial\bm{\nu}_i}+p e^{n}\right\rbrack\right\rvert^2
-\int_{\Gamma_{ij}}\left\lvert\left\lbrack \frac{\partial e^{n+1/2}}{\partial\bm{\nu}_i}+p e^{n+1/2}\right\rbrack\right\rvert^2\right),
\end{multline*}
where $\lbrack\cdot\rbrack$ represents a jump across the interface.
Since the coarse step of the DCS-RJMin algorithm minimizes the
Robin Jumps, we have
\begin{multline*}
\eta \sum_i\int_{\Omega_i}\lvert e_i^{n+1/2}-e_i^n\rvert^2\D\bm{x}
+\int_{\Omega_i}\lvert\nabla (e_i^{n+1/2}-e_i^n)\rvert^2\D\bm{x}
\leq\\\leq
\sum_{(i,j)}\frac{1}{4p}\left(\int_{\Gamma_{ij}}\left\lvert\left\lbrack \frac{\partial e^{n}}{\partial\bm{\nu}_i}+p e^{n}\right\rbrack\right\rvert^2
-\int_{\Gamma_{ij}}\left\lvert\left\lbrack \frac{\partial e^{n+1}}{\partial\bm{\nu}_i}+p e^{n+1}\right\rbrack\right\rvert^2\right).
\end{multline*}
Summing over $n\geq0$ yields the stated result. 
\end{proof}


\section{Numerical Results}\label{sect:NumericalResults}
\begin{figure}[hpbt]
\begin{minipage}{\textwidth}
\begin{minipage}{0.5\textwidth}
\includegraphics[width=\textwidth]{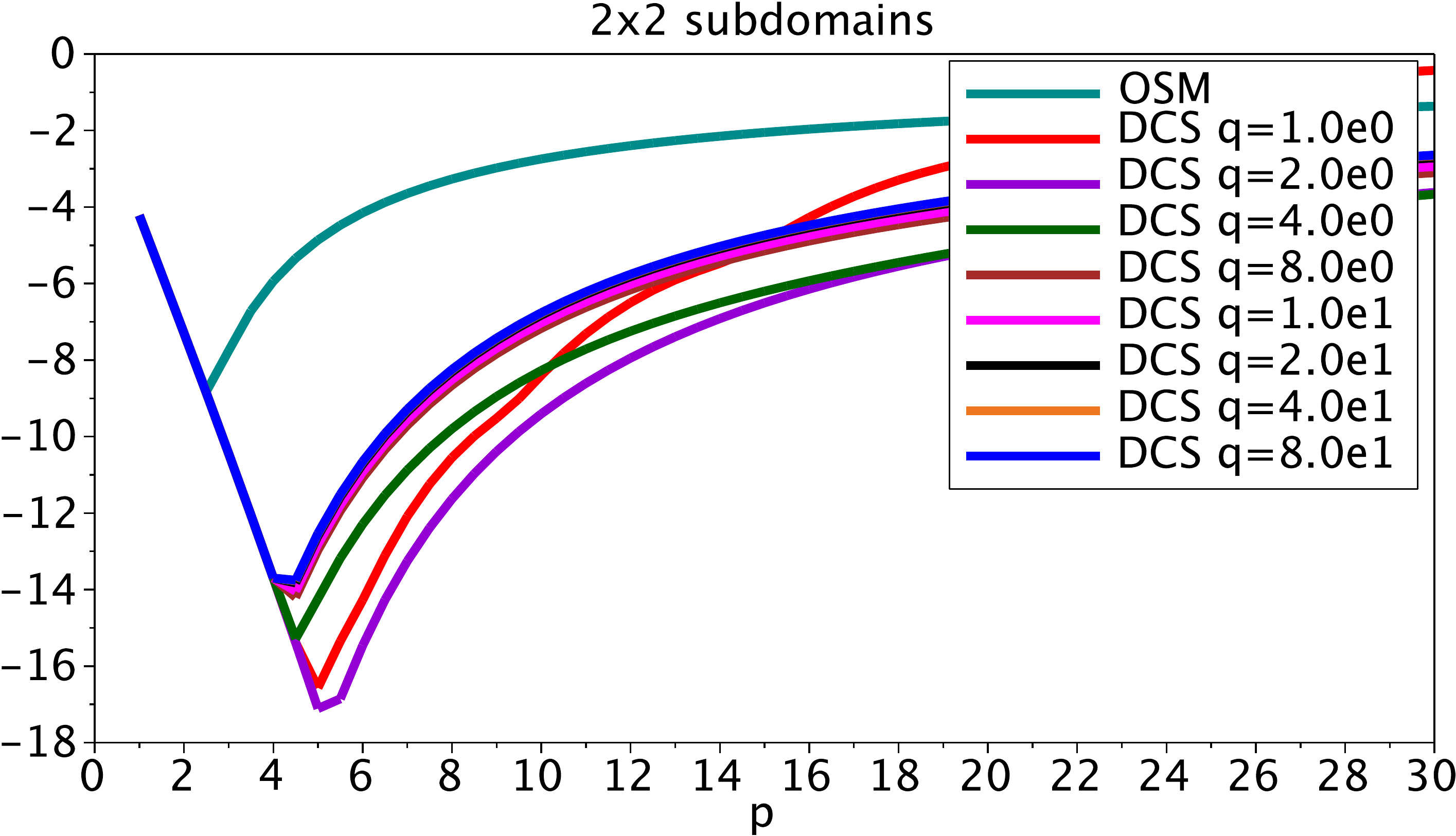}\\
\includegraphics[width=\textwidth]{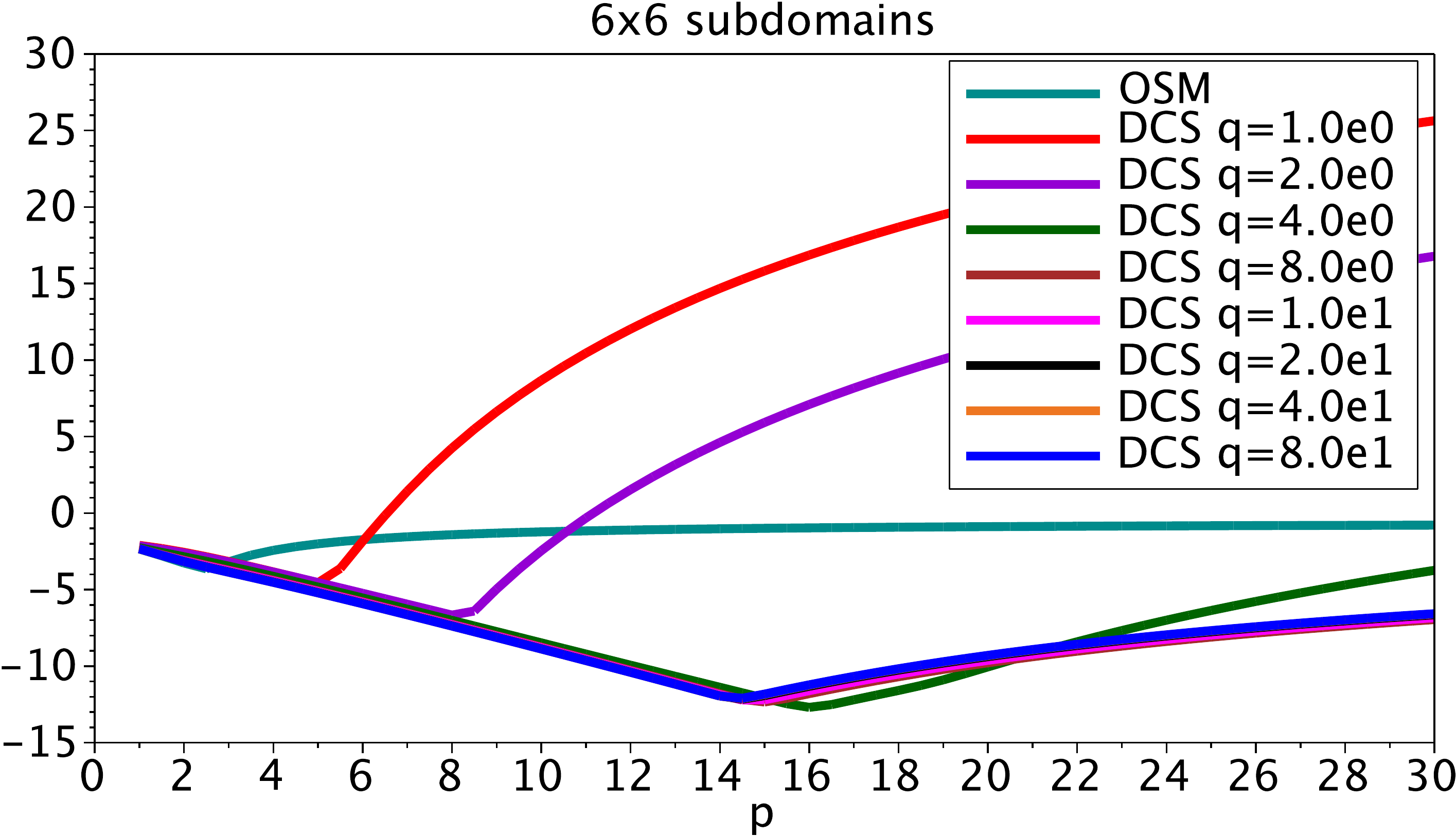}\\
\end{minipage}
\begin{minipage}{0.5\textwidth}
\includegraphics[width=1\textwidth]{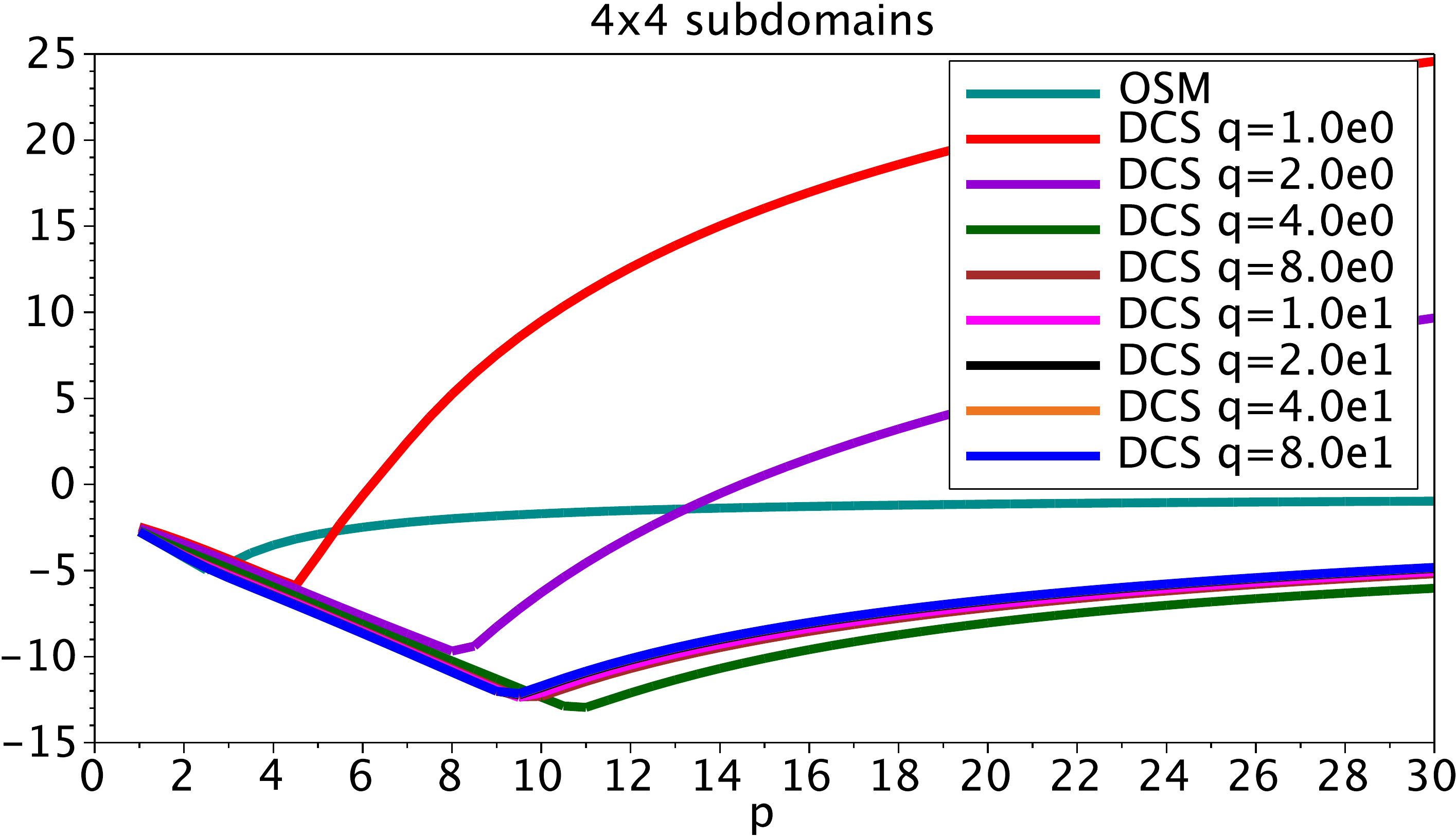}\\
\includegraphics[width=1\textwidth]{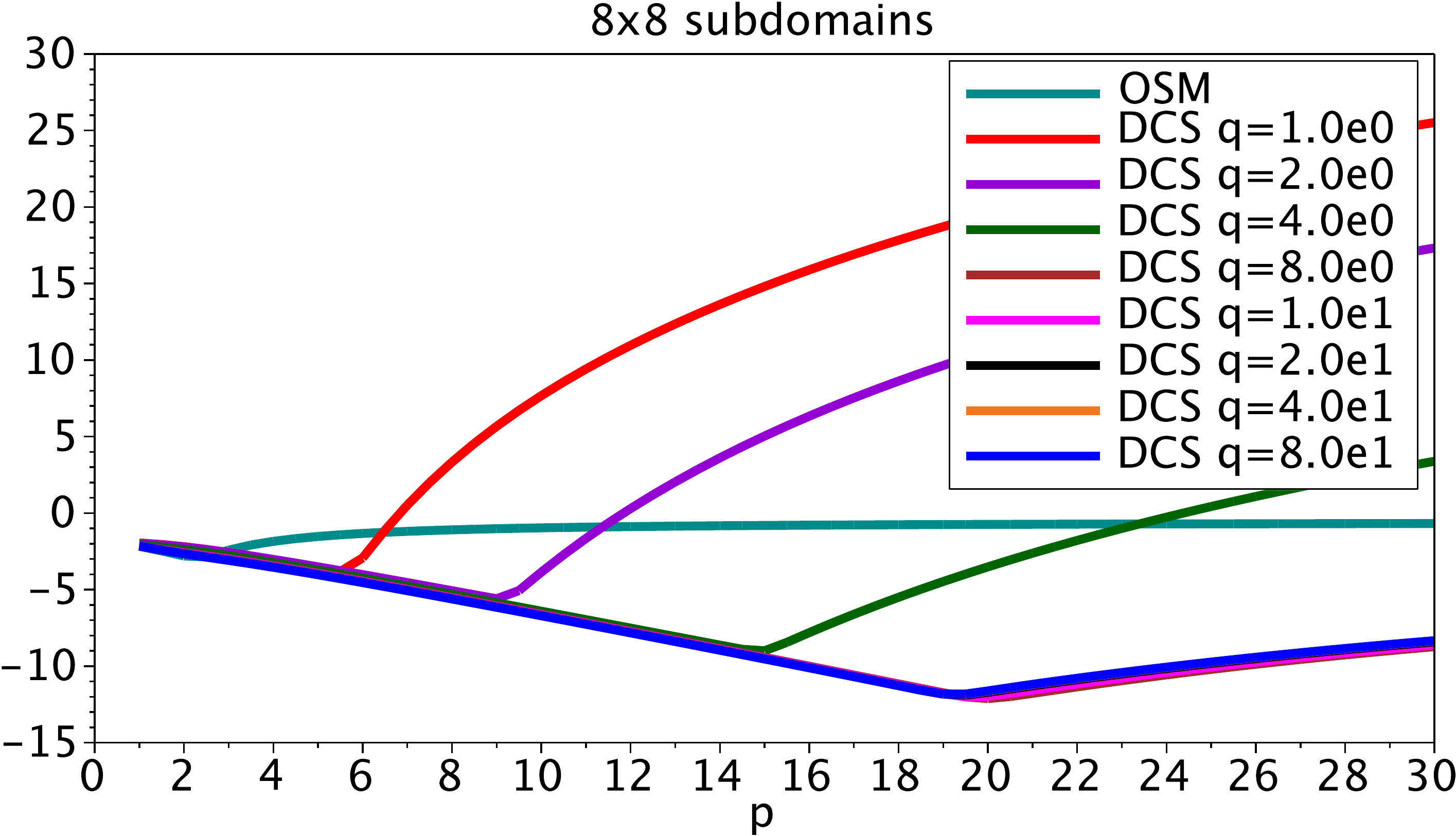}\\
\end{minipage}
\end{minipage}
\caption{
Convergence for OSM and DCS-RJmin with
  $\Omega=\lbrack0,4\rbrack^2$, $f(x,y)=0$ and random initial boundary
  conditions. Plotting $\log(\lVert e_{50}\rVert_\infty/\lVert
e_{0}\rVert_\infty)$.}\label{fig:MultiplePQ}
\end{figure}

We have implemented the DCS-RJMin algorithm in C++ for cell-centered
finite volumes on a cartesian grid. We chose
$\Omega=\rbrack0,4\lbrack\times\rbrack0,4\lbrack$, $\eta=0$ and
iterated directly on the errors by choosing $f=0$. We initialized 
the Robin boundary conditions at the interfaces between subdomains at
random and performed multiple runs of the DCS-RJMin algorithm for
various values of $p$, $q$ and of the number of subdomains. We had
$p$ vary from $1.0$ to $20.0$ with $0.5$ increments and $q$ takes the
following values $p_m\times10^{p_e}$ with $p_m$ in $\{1.0, 2.0, 4.0,
8.0\}$ and $p_e$ in $\{0,1\}$. We consider $2\times2$,
$4\times 4$, $6\times6$ and $8\times8$ subdomains. There are always $20\times20$
cells per subdomains. In Figure~\ref{fig:MultiplePQ},
we plot $\log(\lVert e_{50}\rVert_\infty/\lVert
e_{0}\rVert_\infty)$ as a function of $p$ for various values of
$q$. First, we notice that for each value of
$q$, the convergence deteriorates above a certain $p_q$.
In fact, for low values of $q$ and high values of $p$, the iterates
diverge. For two different values of $q$, the curves are very close when $p$ is
smaller than both $p_q$. We also notice than even though we could only
prove Proposition~\ref{prop:Not Really Convergence} for the case
$p=q$, we observe numerical convergence even when $p\neq q$. In fact
$p=q$ is not the numerical optimum. This is to be expected at the
intuitive level: for a theoretical proof of convergence, we want the algorithm to
keep lowering some functional. The existence of such a functional is
likely only if all the substeps
of the algorithm are optimized for the same kind of errors. If $p=q$, both the coarse step or the local step
will either remove low frequency errors (small $p$ and $q$) or high frequency
ones (high $p$ and $q$). An efficient numerical algorithm should have
substeps optimized for completely different kind of errors. This is
why efficient numerical algorithms are usually the ones for which the
convergence proofs are the more difficult.

\section{Conclusion}
In this paper, we have introduced a new discontinuous coarse space algorithm, the
DCS-RJMin, that is suitable for cell-centered finite volume
discretizations. The coarse space greatly improve numerical convergence.
It would be of great interest to study which is the optimal low-dimensional 
subspace of all piecewise discontinuous piecewise harmonic functions. 
Future work also includes the development of a possible
alternative to coarse space in order to get scalability: ``Piecewise
Krylov Methods'' where the same minimization problem than the one used
in DCS-RJMin is used but where the coarse space are made of
piecewise, per subdomain, differences between consecutive one-level iterates.
\bibliographystyle{plain}
\bibliography{ajoutmaths,ddm,info}

\end{document}